\date{}
\DeclareMathOperator*{\Min}{\mbox{Min }}
\newcommand{\om}{\omega}
\renewcommand{\O}{\Omega}
\newtheorem{thm}{THEOREM}[section]
\newtheorem{prop}[thm]{PROPOSITION}
\theoremstyle{remark}
\theoremstyle{definition}
\newtheorem{defn}[thm]{DEFINITION}
\renewcommand{\O}{\Omega}
\newenvironment{hanglist} 
  {\begin{list}{}{\setlength{\itemsep}{4pt}
  \parindent 0pt  \setlength{\parsep}{0pt}\setlength{\leftmargin}{+25pt}
  \setlength{\itemindent}{-\parindent}}}{\end{list}}
\begin{document}
	
\begin{center}
	{\Large Absolute semi-deviation risk measure for ordering problem with transportation cost in Supply Chain}\\[12pt]

	\footnotesize
	
	\mbox{\large Saravanan Venkatachalam$^\text{a}$$^\dagger$, Lewis Ntaimo$^\text{b}$}\\
	$^\text{a}$Department of Industrial and Systems Engineering, Wayne State University, \\ 4815 Fourth St., Detroit, MI 48202, USA. \\	
	$^\text{b}$Department of Industrial and Systems Engineering, Texas A\&M
	University, \\ 3131 TAMU, College Station, TX 77843, USA. \\
	$^\dagger$Corresponding author: \mbox{saravanan.v@wayne.edu}\\
	\normalsize
	
\end{center}

\begin{abstract}
	We present a decomposition method for stochastic programs with 0-1 variables in the second-stage with absolute semi-deviation (ASD) risk measure. Traditional stochastic programming models are risk-neutral where expected costs are considered for the second-stage. A common approach to address risk is to include a dispersion statistic in addition with expected costs and weighted appropriately. Due to the lack of block angular structure, stochastic programs with ASD risk-measure possess computational challenges. The proposed decomposition algorithm uses another risk-measure `expected excess', and provides tighter bounds for ASD stochastic models. We perform computational study on a supply chain replenishment problem and standard knapsack instances. The computational results using supply chain instances demonstrate the usefulness of ASD risk-measure in decision making under uncertainty, and knapsack instances indicate that the proposed methodology outperforms a direct solver.\\
	
	\noindent \emph{Keywords: }absolute semi-deviation, stochastic programming, replenishment, supply chain management
\end{abstract}
	
\section{Introduction}\label{sec-intro}
A two-stage stochastic programming (SP2) approach consists of `here-and now' first-stage decisions and recourse decisions in the second-stage. Choosing the first-stage decision is considered as selecting a random variable based on the recourse costs. In  traditional two-stage stochastic programs, risk neutral expected costs are considered in the second-stage which implies that first-stage random variable is selected only based on the mean. There are wide range of applications for SP2 in the literature with the expected costs however when the random data has a significant variance then a solution based on expected costs alone may not provide robustness. To alleviate this, a weighted risk term is added along with the mean, and this is known as two-stage stochastic programs with risk-measures. When risk measures are coherent, then they are known to possess suitable mathematical properties conducive to construct to efficient algorithms. Two broad categories of coherent risk terms are introduced; \textit{quantile} and \textit{deviation} based risk measures. \textit{Quantile} risk-measures are based on the quantiles of the probability distributions of the random data. Types of quantile based risk-measures include: \textit{excess probability} - measures the probability of exceeding a prescribed target level, \textit{conditional-value-at-risk} (CVaR) - measures the expectation of worst outcomes for a given probability.  Deviation risk measures are based on the deviation of first-stage random variable from a prescribed target, and deviation based risk measures include: \textit{expected excess} (EE) - measures the expected value of the excess over a given target, \textit{absolute semi-deviation} (ASD) - measures the expected value of the excess over the mean.  In this work, we study and present a methodology for solving large scale SP2 with absolute semi-deviation risk measure. The computational efficacy of the methodology is evaluated using multi-items knapsack instances from the literature, and as a supply chain application, multi-item single source ordering problem (MSSOP) with transportation cost is presented with ASD risk-measure, and solutions are compared with risk-neutral expected costs.

Incorporating risk measures in SP2 brings up computational challenges and is a fairly a new research area. Except ASD, all other risk measures described above have block angular structure so they are conducive for standard Benders' decomposition based L-shaped method which is primarily used for solving SP2. However, other risk measures depend upon a user defined target apart from weighted risk term. The optimal \textit{user-defined} targets are difficult to determine in practice unless the user has an excellent knowledge about the application and uncertainty in the data. In contrast, ASD does not require a user defined target for the model however it lacks block angular structure necessary for decomposition of scenarios in L-shaped method. In this study, we propose a methodology to use a modified representation of EE to solve large scale SP2 with ASD risk measures. 

More than one item is purchased from a vendor on a periodic basis in many supply chains (SCs). The existence of multiple items in a procurement process poses a challenge as these items may face distinct demand streams and have different shipment and weight constraints, and variable holding rates. Typically, a vendor or a transportation provider may take advantage by purchasing a group of items through transportation discounts. However, this might increase the holding costs at the destination. Hence, there is a need to coordinate transportation, ordering and inventory decisions in order to maximize a firm’s savings. For this study, we consider a problem of ordering multiple items from a single source by a downstream SC member, such as a retailer in a distribution environment or a manufacturer in production environment subject to uncertainty in demand. The downstream SC member makes replenishment decisions for items with stochastic demand and transportation cost over a finite planning horizon. The costs considered include holding and ordering costs for each item, along with the transportation cost for shipping the items from the vendor. In a risk-neutral approach, the objective of the problem is to minimize `here-and-now' first-stage replenishment costs and expected inventory and lost sales costs while serving customer demand from shipments and inventory. This problem is a generalization of many dynamic demand lot sizing problems in the literature, and is often encountered in retail and production environments.

In the last few decades this coordinated replenishment problem has received considerable attention. However, typically, the transportation cost is considered to be a constant with no quantity discounts. On the other hand, in SCs lower freight rates may be obtained by consolidating shipments to take advantage of economies of scale and quantity discounts. The freight rate per pound decreases as the total weight of the cargo increases, clearly providing an incentive for the shipper to consider consolidating order shipments. We assume a generic transportation cost structure that fairly represents both less-than-truckload (LTL) and full-truckload (FTL) environments. The FTL cost structure represents shipping in a containerized shipment of various sizes, while LTL cost structure represents shipping in partial loads. These freight rates could vary from week to week depending on the capacity available in the marketplace. We propose a new two-stage mathematical formulation and extend it to consider risk-measures.

The contributions of this work to the literature on optimization with risk-measures on SP2 include the following: a) solution methodology for SP2 with ASD risk measure; b) formulation and study for MSSOP in supply chain with ASD risk measure; c) demonstration of efficacy of the proposed methodology using knapsack instances from the literature. The rest of this paper is organized as follows: in the next section, relevant research from the literature is reviewed. In Section 3, problem formulations and required background for the methodology are presented. In Section 4, the foundation and algorithm are presented for the SP2 with ASD risk-measure. Section 5 introduces formulation for MSSOP in supply chain, and computational results using ASD and expected cost solutions are compared. Furthermore, the computational results based on knapsack instances are reported. Finally, Section 6 draws conclusions.

\section{Related Work}\label{sec-intro}
Research in SP2 with mean-risk measures in the literature include its characterization and algorithms for each specification. However, computational study of the empirical behavior of the algorithms is fairly limited. For a recent survey on mean-risk stochastic programs, we refer the interested reader to work of Krokhmal et al. \cite{krokhmal2011modeling}. We should also point out that algorithms for mean-risk SP2 with integer restrictions on decision variables is an ongoing active area of research [M{\"a}rkert, Andreas and Schultz \cite{15markert2005deviation}, Schultz and Tiedemann \cite{24schultz2003risk}]. Risk-averse SP2 with mean-risk measures has a significant potential to be applied in different fields. Since it is a fairly recent development, it has only been applied to just a few problems. For instance, Schultz and Tiedemann \cite{5schultz2006conditional}, Schultz and Neise \cite{12schultz2007algorithms}, Liu et al. \cite{13liu2009two}, and Noyan \cite{14noyan2012risk}, study problems in chemical engineering, energy optimization, transportation network protection, and disaster management, respectively. Similarly, Beier et al. \cite{beier2015nodal}, and Corolli et al. \cite{corolli2015two} study problems in risk-neutral setup in supply chain, and air-traffic flow management, respectively.

MSSOP belongs to the class of dynamic demand joint/coordinated capacitated lot-sizing problems (CCLSP). Robinson, Narayanan, and Sahin \cite{robinson2009coordinated} provide a taxonomy of replenishment problems in a deterministic dynamic demand environment. In CCLSP, a joint/family setup cost is incurred every time one or more items of a product family is replenished, subject to a capacity constraint and a unit cost for each item being ordered. Removing the joint setups yield multi-item capacitated lot sizing problems, and relaxing the capacity constraint yields the coordinated uncapacitated lot-sizing problem (CULSP). Restricting the number of items yields the single item capacitated lot sizing problem, and relaxing the capacity constraint yields the most elemental single item uncapacitated replenishment problem solved using Wagner and Whithin \cite{wagner1958dynamic} algorithms. The CULSP is shown to be NP-hard in Arkin, Joneja, and Roundy \cite{arkin1989computational}; hence the more general case of CCLSP is also a NP-hard problem. The literature on solving these deterministic lot-sizing problems is extensive, and the solution approaches include dynamic programming algorithms (Silver \cite{silver1979coordinated}), B\&B approaches (Robinson and Gao \cite{robinson1996dual}), Lagrangian decomposition methods (Rizk, Martela, and Ramudhinb \cite{martel2002lagrangean}; Ertogral \cite{ertogral2008multi}) and heuristic procedures (Narayanan and Robinson \cite{narayanan2010efficient}, Venkatachalam and Narayanan \cite{venkatachalam2015efficient}). A more detailed survey of these problems is provided in Zoller and Robrade \cite{zoller1988efficient}), and Jans and Degraeve \cite{jans2008modeling}.

Traditionally, the joint setup cost is considered to be a binary (0 or 1) decision variable, i.e. the entire transportation or production capacity is available for replenishment once the setup cost is incurred. In this research, we have a more general piecewise joint setup cost structure. Specifically, we assume the FTL carrier charges a fixed amount per truck whether it is partially or fully loaded, while the LTL carrier charges an incrementally reduced price for each unit shipped. The problem addressed here, not only has all the complexities of the capacitated joint replenishment problem, but also a generic piecewise family setup cost function tied to the capacity available for replenishment, making the problem even more difficult to solve. This is the most general class of dynamic lot sizing problem, involving multiple items, capacity restrictions and a common replenishment cost structure with stochastic demand. Therefore, an efficient solution approach to this problem would be of significant benefit to the SC industry.

\section{Problem Formulation}\label{sec-intro}
A SP2 problem involves optimizing the here-and-now (first-stage) costs plus expected future (second-stage) costs under risk neutral condition. A two-stage program under risk neutral (minimizing expected costs) measure is represented as follows:

\begin{equation}
\begin{alignedat}{2}\label{eq1}
\mathcal{Q}_{E} = \Min      \   & c^\top x + \mathbb{E}_\omega \phi(x,\om)          && \\
\text{s.t. } & Ax \geq b  && \\
& x \in X.  
\end{alignedat}
\end{equation}

\begin{equation}
\begin{alignedat}{2}\label{eq-2}
\phi(x,\om) = \Min \ & q(\omega)^{\top} y(\omega)            && \\
\text{s.t. } & W(\omega) y({\omega}) \geq h({\omega}) - T({\omega}) x && \\
& y(\omega) \in Y.
\end{alignedat}
\end{equation}

In the second-stage (scenario) problem, $y(\om)$ denotes the recourse decision vector and $W(\om) \in \mathbb{R}_+^{m_2 \times n_2}$ is the random recourse matrix. It is assumed that $T(\om):\Omega \mapsto \mathbb{R}^{m_2 \times n_1} $, $h(\om):\Omega \mapsto \mathbb{R}^{m_2}$ and $q(\om):\Omega \mapsto \mathbb{R}^{n_2}$ are measurable mappings defined on a probability space $(\Omega,\mathcal{F},\mathbb{P})$. The set $Y$ imposes integer restrictions on all or some components of $y(\om)$. $\omega$ is a realization of a multivariate random variable $\tilde{\omega}$, and $\mathbb{E}_\omega$ denotes the mathematical expectation operator. In the above formulation, $p_\omega$ denotes the probability of occurrence for the scenario $\omega$, and $\sum{p_{\omega \in \Omega}} = 1$, where $\Omega$ is the set of scenarios.

We consider problem \eqref{eq1} under the following assumptions:
\begin{itemize}
	\item[\textbf{A1.}]
	The random variable $\tilde{\om}$ is discrete with finitely many scenarios $\om \in \O$, each with probability of occurrence $p(\om)$ such that $\sum_{\om \in \O}p_{\omega} = 1$.
	\item[\textbf{A2.}] The first-stage feasible set $\{A x \geq b, x \in X\}$ is nonempty.
	\item[\textbf{A3.}] The second-stage feasible set $\{W y(\om) \geq h(\omega) - T(\omega) x, y(\om) \in Y\}$ and is bounded and nonempty for all $x \in \{A x \geq b, x \in X\}$.
\end{itemize}

\noindent Assumption (A1) is needed for tractability while assumptions (A2) and (A3) are needed to guarantee that the problem has an optimal solution. Assumption (A3) implies the relatively recourse assumption, i.e., $\sum_{\omega \in \Omega} p_\omega q(\omega)^\top y(\omega) < \infty$ for all $x \in \{A x \geq b | x \in X\}$. 

\begin{defn}
Expected excess risk measure for the random variable $\tilde{\omega}$ is defined as

\begin{equation}
	\begin{alignedat}{2}\label{eq-ee1}
		EE(\tilde{\omega}) = \mathbb{E} ([\tilde{\omega}- \eta]_+), && \\
	\end{alignedat}
\end{equation}
where $[a]_+ = \text{max}(a,0), a\in \mathbb{R}, \text{ and } \eta \in \mathbb{R} \text{ is a given parameter}.$ 
\end{defn}

\begin{prop}
	Based on \text{\textbf{A1}} and $\rho \in [0,1]$, a mean-risk problem with expected excess risk measure can be represented $ $  as follows: 
	
\begin{equation}
\begin{alignedat}{2}\label{prop32eq1}
\Min \  & f_{\tilde{\omega}} + \rho(EE(f_{\tilde{\omega}})), && \\
\end{alignedat}
\end{equation}
where $ f_{\tilde{\omega}} = c^\top x + \mathbb{E}_\omega \phi(x,\om)$, and this can be equivalently stated as a following problem:
	
	\begin{equation}
	\begin{alignedat}{2}\label{eq-1b}
	\mathcal{Q}_{EE} = \Min \,\,  & (1+\rho) c^\top x + \sum_{\omega \in \Omega} p_\omega q(\omega)^\top y(\omega) +  \rho	\sum_{\omega \in \Omega} p_\omega v(\omega)	   && \\
	\text{s.t. }        & A x \geq b                             && \\
	&          T(\omega) x + W(\omega) y(\omega) \geq h(\omega),   \quad \quad \quad \forall \, \omega \, \in \Omega,      && \\
	&          q(\omega)^\top y(\omega) - \eta \leq v(\omega),  \quad \forall \, \omega \, \in \Omega,  && \\	
	& x \in X, y(\omega) \in Y, v(\omega) \in \mathbb{R}^+.               &&
	\end{alignedat}
	\end{equation}
\end{prop}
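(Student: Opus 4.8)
The plan is to prove the claimed equivalence by transforming the compact mean-risk objective \eqref{prop32eq1} into the explicit scenario-decomposed program \eqref{eq-1b}, relying on the finiteness of the scenario set from assumption \textbf{A1} together with a standard epigraph linearization of the positive-part operator that appears in the expected-excess term \eqref{eq-ee1}. First I would invoke \textbf{A1} to replace each expectation by a finite sum over $\omega \in \Omega$: the mean contribution $\mathbb{E}_\omega \phi(x,\omega)$ becomes $\sum_{\omega \in \Omega} p_\omega \phi(x,\omega)$, and the excess term $EE(\cdot)=\mathbb{E}([\,\cdot-\eta]_+)$ becomes $\sum_{\omega \in \Omega} p_\omega [\,\cdot-\eta]_+$. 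Substituting the recourse value $\phi(x,\omega)$ from \eqref{eq-2} then exposes an inner minimization over each $y(\omega)$ nested inside the outer minimization over $x$.

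Next I would merge the two levels of minimization into a single joint minimization over $\bigl(x,\{y(\omega)\}_{\omega\in\Omega}\bigr)$. This step is justified because the second-stage feasible regions decouple across scenarios and, by \textbf{A2} and \textbf{A3}, are nonempty and bounded for every admissible $x$, so the merge introduces neither infeasibility nor unboundedness. The only remaining nonlinearity is the term $[\,q(\omega)^\top y(\omega)-\eta\,]_+$. I would linearize it by introducing, for each scenario, an auxiliary variable $v(\omega)\in\mathbb{R}^+$ together with the two constraints $q(\omega)^\top y(\omega)-\eta \le v(\omega)$ and $v(\omega)\ge 0$, which is exactly the pair of constraints appearing in \eqref{eq-1b}.

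The crux of the argument, and the step I expect to be the main obstacle, is showing that this epigraph relaxation is \emph{exact}, i.e. that $v(\omega)$ attains the value $[\,q(\omega)^\top y(\omega)-\eta\,]_+$ at every optimal solution, and that the added constraint does not perturb the optimal recourse. Since $\rho\in[0,1]$, the coefficient $\rho p_\omega$ of each $v(\omega)$ in the objective of \eqref{eq-1b} is nonnegative, so the minimization drives $v(\omega)$ down to its lower envelope $\max\bigl(q(\omega)^\top y(\omega)-\eta,\,0\bigr)$ while the upward constraint forces equality. The same sign condition yields the companion fact that the joint minimization over $y(\omega)$ still recovers $\phi(x,\omega)$: for fixed $x$ the per-scenario objective reduces to $q(\omega)^\top y(\omega)+\rho\,[\,q(\omega)^\top y(\omega)-\eta\,]_+$, and the scalar map $t\mapsto t+\rho[\,t-\eta\,]_+$ is nondecreasing whenever $\rho\ge 0$, so its minimizer coincides with the minimizer of $q(\omega)^\top y(\omega)$ alone. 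Hence the excess penalty cannot pull $y(\omega)$ away from the value that defines $\phi(x,\omega)$, and at optimality the risk block equals $\rho\sum_{\omega\in\Omega} p_\omega[\,\phi(x,\omega)-\eta\,]_+$.

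Finally I would collect terms. The deterministic first-stage cost $c^\top x$ enters the mean with weight one and the risk-weighted component with weight $\rho$, so the two contributions combine into the coefficient $(1+\rho)$ on $c^\top x$ displayed in \eqref{eq-1b}; the expected recourse appears as $\sum_{\omega\in\Omega} p_\omega q(\omega)^\top y(\omega)$ and the linearized excess as $\rho\sum_{\omega\in\Omega} p_\omega v(\omega)$. To close the equivalence I would argue both inequalities: any feasible triple $(x,\{y(\omega)\},\{v(\omega)\})$ for \eqref{eq-1b} yields an objective value no smaller than \eqref{prop32eq1} (because $v(\omega)\ge[\,\cdot-\eta\,]_+$), while an optimal solution of \eqref{prop32eq1}, augmented with $v(\omega)=[\,q(\omega)^\top y(\omega)-\eta\,]_+$, is feasible for \eqref{eq-1b} at the same objective value. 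Matching these two directions establishes $\mathcal{Q}_{EE}$ as the value of the mean-risk problem and completes the proof.
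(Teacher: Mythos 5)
Your proposal handles the routine parts of the argument carefully --- in places more carefully than the paper itself: the finite-sum conversion under \textbf{A1}, the exactness of the epigraph linearization (nonnegative coefficients $\rho p_\omega$ drive each $v(\omega)$ down to the positive part), and the observation that $t \mapsto t + \rho[t-\eta]_+$ is nondecreasing, so the joint minimization over $y(\omega)$ still recovers $\phi(x,\omega)$, are all sound. But there is a genuine gap at the crux of the proposition: explaining why the objective carries the coefficient $(1+\rho)$ on $c^\top x$ while the scenario constraints read $q(\omega)^\top y(\omega) - \eta \leq v(\omega)$ \emph{without} $c^\top x$ inside them. The random cost the risk measure acts on is $f_{\tilde\omega} = c^\top x + \phi(x,\tilde\omega)$, so the excess term is $\mathbb{E}\bigl([\,c^\top x + \phi(x,\tilde\omega) - \eta\,]_+\bigr)$. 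You silently drop $c^\top x$ from inside the positive part (``the only remaining nonlinearity is $[\,q(\omega)^\top y(\omega)-\eta\,]_+$'') and then reinstate it in the objective by asserting that $c^\top x$ ``enters the risk-weighted component with weight $\rho$.'' Taken together, these two moves assume the identity $[\,c^\top x + q(\omega)^\top y(\omega) - \eta\,]_+ = c^\top x + [\,q(\omega)^\top y(\omega) - \eta\,]_+$, which is false as a statement about real numbers: it fails whenever $q(\omega)^\top y(\omega) < \eta$ and $c^\top x > 0$. As written, your argument proves neither the formulation in which the constraint is $c^\top x + q(\omega)^\top y(\omega) - \eta \le v(\omega)$ with coefficient $1$ on $c^\top x$, nor the stated formulation \eqref{eq-1b}; it mixes the two.

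The missing ingredient is precisely the step on which the paper's proof rests: EE is treated as a coherent risk measure (citing Schultz), and its translation-invariance property is invoked to write $EE\bigl(c^\top x + \phi(x,\tilde\omega)\bigr) = c^\top x + EE\bigl(\phi(x,\tilde\omega)\bigr)$, which in one stroke produces the $(1+\rho)$ coefficient and removes $c^\top x$ from the per-scenario excess constraints. Only after this reduction does the finite-sum and epigraph machinery you describe apply, now to $[\,q(\omega)^\top y(\omega) - \eta\,]_+$ alone. To repair your proof you must either state and justify this risk-measure property, or otherwise prove directly that the two deterministic equivalents have equal optimal values; elementary term-collection of the kind you attempt cannot do it, because the required interchange of $c^\top x$ with the positive-part operator is exactly the nontrivial content of the proposition.
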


\begin{proof}
EE($\tilde{\omega}$) is a coherent risk measure (\cite{schultz2010risk}). Hence, using the transnational invariance property of coherent risk measure,
	\begin{equation}
	\begin{alignedat}{2}	
		EE(c^\top x + \mathbb{E}_\omega \phi(x,\om)) = c^\top x + EE(c^\top x + \mathbb{E}_\omega \phi(x,\om)). && \nonumber
	\end{alignedat}		
	\end{equation}
Hence, $f_{\tilde{\omega}} + \rho(EE(f_{\tilde{\omega}})) = f_{\tilde{\omega}} + \rho(EE(c^\top x + \mathbb{E}_\omega \phi(x,\om)))$ \newline		
\phantom{Hence, $f_{\tilde{\omega}} + \rho(EE(f_{\tilde{\omega}}))$}	$ = (1+\rho) c^\top x + \mathbb{E}_\omega \phi(x,\om) + \rho(EE(\mathbb{E}_\omega \phi(x,\om))). \\$	

Since $\tilde{\omega}$ is a discrete random variable with finite realizations then the expectation operator for second-stage can be represented as $\mathbb{E}_\omega \phi(x,\om) = \sum_{\omega \in \Omega} p_\omega q(\omega)^\top y(\omega)$. Hence, the assertion based on \eqref{eq-2} and \eqref{eq-ee1}.
\end{proof}

\begin{defn}
	Absolute semi-deviation risk measure for the random variable $\tilde{\omega}$ is defined as
	
	\begin{equation}
	\begin{alignedat}{2}\label{eq-ee}
	ASD(\tilde{\omega}) = \mathbb{E} ([\tilde{\omega}- \mathbb{E}[\tilde{\omega}]]_+), && \\
	\end{alignedat}
	\end{equation}
	where $[a]_+ = \text{max}(a,0), a\in \mathbb{R}.$ 
\end{defn}

\begin{prop}
	Based on \text{\textbf{A1}} and $\rho \in [0,1]$, a mean-risk problem with absolute semi-deviation risk measure can be represented $ $  as follows:
	
	\begin{equation}
	\begin{alignedat}{2}\label{propeq1}
	\Min \  & f_{\tilde{\omega}} + \rho(ASD(f_{\tilde{\omega}})), && \\
	\end{alignedat}
	\end{equation}
	where $ f_{\tilde{\omega}} = c^\top x + \mathbb{E}_\omega \phi(x,\om)$, and this can be equivalently stated as a following problem:
	
	\begin{equation}
	\begin{alignedat}{2}\label{eq-1c}
	\mathcal{Q}_{ASD}= \Min \,\,  & (1-\rho)c^\top x + (1-\rho)\sum_{\omega \in \Omega} p_\omega q(\omega)^\top y(\omega) +  \rho	\sum_{\omega \in \Omega} p_\omega v(\omega)	   && \\
	\text{s.t. }        & A x \geq b                             && \\
	&          T(\omega) x + W(\omega) y(\omega) \geq h(\omega), \quad \quad \quad \forall \, \omega \, \in \Omega,         && \\
	&          c^\top x + q(\omega)^\top y(\omega) \leq v(\omega), \quad \quad \quad \forall \, \omega \, \in \Omega,    && \\	
	&          c^\top x + \sum_{\omega \in \Omega} p_\omega q(\omega)^\top y(\omega) \leq v(\omega), \quad \forall \, \omega \, \in \Omega,    && \\		
	& x \in X, y(\omega) \in Y, v(\omega) \in \mathbb{R}.               &&
	\end{alignedat}
	\end{equation}	
\end{prop}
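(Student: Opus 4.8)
The plan is to reduce the claimed equivalence to a $[\,\cdot\,]_+$-linearization of the mean-deviation form of $ASD$, in the same spirit as the expected-excess reformulation \eqref{eq-1b} but without appealing to coherence. First I would fix a first-stage decision $x$ and recourse decisions $y(\omega)$ that are feasible for the constraints of \eqref{eq-1c}, and write $f(\omega) = c^\top x + q(\omega)^\top y(\omega)$ for the realized total cost in scenario $\omega$; thus $f_{\tilde{\omega}}$ is the discrete random variable taking the value $f(\omega)$ with probability $p_\omega$, and by \textbf{A1} its mean is $\mathbb{E}[f_{\tilde{\omega}}] = c^\top x + \sum_{\omega \in \Omega} p_\omega q(\omega)^\top y(\omega)$. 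Using Definition \eqref{eq-ee}, the mean-risk objective of \eqref{propeq1} for this point reads
\[
\mathbb{E}[f_{\tilde{\omega}}] + \rho\, ASD(f_{\tilde{\omega}}) = \mathbb{E}[f_{\tilde{\omega}}] + \rho \sum_{\omega \in \Omega} p_\omega \bigl[ f(\omega) - \mathbb{E}[f_{\tilde{\omega}}] \bigr]_+ .
\]

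The key algebraic step is to split the leading mean as $\mathbb{E}[f_{\tilde{\omega}}] = (1-\rho)\mathbb{E}[f_{\tilde{\omega}}] + \rho\,\mathbb{E}[f_{\tilde{\omega}}]$ and fold the $\rho$-weighted copy into the deviation sum. Since $\sum_{\omega \in \Omega} p_\omega = 1$, the identity $a + [b-a]_+ = \max\{a,b\}$ (applied with $a = \mathbb{E}[f_{\tilde{\omega}}]$ and $b = f(\omega)$) collapses that part into $\rho \sum_{\omega \in \Omega} p_\omega \max\{\mathbb{E}[f_{\tilde{\omega}}], f(\omega)\}$. I would then introduce one variable $v(\omega)$ per scenario to stand for this maximum; representing $v(\omega) \geq \max\{\mathbb{E}[f_{\tilde{\omega}}], f(\omega)\}$ by the two inequalities $v(\omega) \geq f(\omega)$ and $v(\omega) \geq \mathbb{E}[f_{\tilde{\omega}}]$ gives, after substituting the expressions for $f(\omega)$ and $\mathbb{E}[f_{\tilde{\omega}}]$, exactly the coupling constraints $c^\top x + q(\omega)^\top y(\omega) \leq v(\omega)$ and $c^\top x + \sum_{\omega \in \Omega} p_\omega q(\omega)^\top y(\omega) \leq v(\omega)$ of \eqref{eq-1c}. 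Expanding the remaining term $(1-\rho)\mathbb{E}[f_{\tilde{\omega}}] = (1-\rho)c^\top x + (1-\rho)\sum_{\omega \in \Omega} p_\omega q(\omega)^\top y(\omega)$ then reproduces the stated objective of $\mathcal{Q}_{ASD}$.

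The step that needs genuine care, as opposed to routine algebra, is establishing that passing from the maximum to the pair of inequalities is exact at optimality, i.e. that $v(\omega) = \max\{\mathbb{E}[f_{\tilde{\omega}}], f(\omega)\}$ in an optimal solution of \eqref{eq-1c}. This holds because $\rho \geq 0$ and each $v(\omega)$ enters the objective only through the nonnegative weight $\rho p_\omega$, so for fixed $(x, y)$ the minimization over $v$ drives each $v(\omega)$ down to the larger of its two lower bounds; I would phrase this as a short inner-minimization argument so that equality of optimal values, and not merely of objective values at a single feasible point, is obtained in both directions. I would also record the structural subtlety that motivates the remainder of the paper: the bound $v(\omega) \geq \mathbb{E}[f_{\tilde{\omega}}]$ carries the aggregate term $\sum_{\omega \in \Omega} p_\omega q(\omega)^\top y(\omega)$ and hence ties all scenarios together, so, unlike \eqref{eq-1b}, the reformulation is correct yet not block-angular. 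Finally, \textbf{A3} guarantees that all the objective values being compared are finite, so the equivalence is between genuine optima rather than between two unbounded problems.
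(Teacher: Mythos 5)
Your proof takes a genuinely different route from the paper's. The paper dispatches this proposition in one line (``similar to the previous proposition''), i.e.\ it reuses the expected-excess argument that appeals to coherence / translation invariance of the risk measure plus discreteness (\textbf{A1}). You instead give a direct, self-contained linearization: split $\mathbb{E}[f_{\tilde{\omega}}]$ into a $(1-\rho)$ part and a $\rho$ part, fold the $\rho$ part into the semi-deviation via the identity $a+[b-a]_+=\max\{a,b\}$, and then represent each per-scenario maximum by an auxiliary variable $v(\omega)$ with the two coupling inequalities, with exactness at optimality obtained by the inner minimization over $v$. This algebra is correct and reproduces the objective and constraints of \eqref{eq-1c} exactly; it is considerably more explicit than the paper's own justification (whose printed ``translation invariance'' identity is not even self-consistent), and your closing remark about the loss of block-angular structure is precisely the feature the rest of the paper is built around.

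There is, however, one genuine gap. You define the scenario cost as $f(\omega)=c^\top x+q(\omega)^\top y(\omega)$ for an \emph{arbitrary} feasible $y$ of \eqref{eq-1c}, which implicitly replaces \eqref{propeq1} --- whose random cost is $c^\top x+\phi(x,\tilde{\omega})$ with $\phi$ the \emph{optimal value} of the second-stage problem \eqref{eq-2} --- by the joint minimization over $(x,y)$. Your argument shows that \eqref{eq-1c} equals this joint minimization, but not that the joint minimization equals the value-function problem. Closing that last step requires the map $(f(\omega))_{\omega\in\Omega}\mapsto \mathbb{E}[f]+\rho\,ASD(f)$ to be componentwise nondecreasing, since a feasible $y(\omega)$ only satisfies $q(\omega)^\top y(\omega)\geq \phi(x,\omega)$, not equality. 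Using your own max representation, $\mathbb{E}[f]+\rho\,ASD(f)=(1-\rho)\mathbb{E}[f]+\rho\sum_{\omega\in\Omega}p_\omega\max\{f(\omega),\mathbb{E}[f]\}$: the second term is always componentwise nondecreasing, but the first is so only because $1-\rho\geq 0$. This is exactly where the hypothesis $\rho\leq 1$ enters --- a hypothesis your proof never invokes, which is the telltale sign of the omission --- and for $\rho>1$ the negative coefficient $(1-\rho)$ in \eqref{eq-1c} can reward inflating recourse costs, so the equivalence genuinely breaks. A short monotonicity lemma of this form, inserted before your two-directional comparison of optima, completes the proof.
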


\begin{proof}
	Similar to previous preposition.
\end{proof}

A modified formulation of expected excess as a two-stage stochastic program is represented as follows:

\begin{subequations}\label{eq-master-1}
	\begin{align}
	\mathcal{Q}'_{EE} = \Min      \   & (1-\rho) c^\top x + \theta           \\
	\text{s.t. } & Ax \geq b  \\
	& \alpha^{t \top} x + \theta \geq \beta^{t} , \ t \in 1,...,k \label{eq-master-1a} \\
	& x \in X.  \notag
	\end{align}
\end{subequations}
At a given iteration $k$ of the L-shaped algorithm the master problem denoted $MP_{ee}$, takes the form shown in \eqref{eq-master-1}. Constraints \eqref{eq-master-1a} are the \textit{optimality} cuts, where the cut co-efficients `$\alpha$' and right-hand side `$\beta$' are computed based on the optimal dual solution of the subproblems, and `$\theta$' is an unrestricted Benders' variable. Optimality cuts approximate the value function of the second-stage subproblems. For a first-stage solution $x^k$ from the master problem \eqref{eq-master-1}, the subproblem for each scenario $\om \in \Omega$, denoted as $SP_{ee}(x,\om)$, is given as follows:
\begin{subequations}\label{eq-suSIP2}
	\begin{align}
	f^k_{EE}(x^k,\omega) = \Min \ & (1-\rho) q(\omega)^\top y(\omega) +  \rho v(\omega)  \\
	\text{s.t. } & T(\omega) x^k + W(\omega) y(\omega) \geq h(\omega) \\
	&  c^\top x^k + q(\omega)^\top y(\omega) - \eta \leq v(\omega)  && \\		
	& y(\omega) \in Y.               \notag
	\end{align}
\end{subequations}
It should be noted that in order to get dual solutions to construct an optimality cut for $MP_{ee}$, the integrality restrictions for $y(\omega)$ in $SP_{ee}(x,\om)$ should be relaxed, i.e, $y(\omega) \geq 0$ instead of $y(\omega) \in Y$. We will call the relaxed version of $SP_{ee}(x,\om)$ as $SP_{ee}^{lp}(x,\om)$.
\\
\\
Schultz \cite{schultz2010risk} provided the following relationship. Let $\eta \leq \mathcal{Q}_{E}$, then the following is an established relation for a fixed $x \in X$: 

\begin{equation}
\begin{alignedat}{2}\label{eq-1e1}
\mathcal{Q}_{E}(x) \leq \mathcal{Q}'_{EE}(x) + \rho\eta \leq \mathcal{Q}_{ASD}(x).
&& \\
\end{alignedat}
\end{equation}
\\
The relationship indicates that with a good estimation for $\eta$ in $\mathcal{Q}'_{EE}(x)$ can provide a good lower bound for ASD. $\mathcal{Q}'_{EE}(x)$ has block angular structure which is amenable to traditional L-shaped method for SP2. 

\section{Decomposition Algorithms}\label{sec-intro}
In this section, we provide the details of the algorithms. The primary risk-measure algorithm for ASD is referred as `RM-ASD' algorithm, and a cut generation process is utilized within RM-ASD which we refer it as `MOD-EE-SC' procedure. We will start with the details of RM-ASD followed by which MOD-EE-SC procedure is presented. 

L-shaped algorithm framework is used to the solve modified expected excess problem with an initial estimation for target $\eta$ from expected value model. Then the target is modified based on its proximity from the expectation of the scenarios. Additionally, we add sub-gradient cuts to the master problem of modified expected excess to tighten the gap between the bounds provided in \eqref{eq-1e1}. In Step 0, we solve expected value problem and retain the solution vector as $\hat{x}$. The solution vector $\hat{x}$ is used to obtain the values for $\mathcal{Q}'_{EE}$ and $\mathcal{Q}'_{ASD}$ which are further used in \eqref{eq-1e1} to obtain lower and upper bounds for ASD risk-measure in Step 1. The current target for modified expected excess $\eta$ is evaluated and if it is an over estimate then the value of $\eta$ is reduced by a user defined parameter $\xi$ or if it is an under estimate then the value of $\eta$ is increased by $\xi$ in Step 2. In Step 3, based on the works of Ahmed \cite{1ahmed2006convexity}, and Cotton and Ntaimo \cite{cotton2015computational}, a cut is generated using the sub-gradient information of objective function for $\mathcal{Q}'_{ASD}$ using MOD-EE-SC procedure to tighten the gap between the bounds. Modified expected excess problem is solved for the adjusted $\eta$ and the first-stage solution vector is used to compute $\mathcal{Q}'_{ASD}$ in Step 4. Further, the bounds are re-evaluated and adjusted. Finally, the procedure is exited based on a user defined tolerance or looped back to Step 2. It should be noted that solving \eqref{eq-master-1} is not trivial due to the presence of binary variables in the second-stage. Typically, specialized algorithms are to be devised and one of which is utilized in our computation experiments.

\begin{figure}[!htb]
	\begin{hanglist}
		\item \textbf{RM-ASD Algorithm}
		\item \hrulefill
		
		{\it\bf \item Step 0. Initialization.}  Solve \eqref{eq1} and let the solution for $x$ be $\hat{x}$. Set $\eta \leftarrow \mathcal{Q}_{E}$, $\epsilon > 0,\xi > 0, LB \leftarrow -\infty, UB \leftarrow \infty$. Let $S^+$ and $S^-$ be empty sets.
		
		{\it\bf \item Step 1. Generate bounds using expected value problem.} Set $x = \hat{x}$ to solve \eqref{eq-master-1} and obtain $\mathcal{Q}'_{EE}$ and $\mathcal{Q}_{ASD}$. Set $LB = \mathcal{Q}'_{EE} + \rho \eta$ and $UB = \mathcal{Q}_{ASD}$.
		
		{\it\bf  \item Step 2. Evaluate and adjust the target $\eta$ for modified expected excess problem.} 
\newline \textit{Step 2a.} For $\omega \in \Omega$: If {$c^\top \hat{x} + q(\omega)^\top y(\omega) -  \eta > 0 $}, then $S^+ = S^+ \cup \omega$, else if {$c^\top \hat{x} + q(\omega)^\top y(\omega) -  \eta < 0 $}, then $S^- = S^- \cup \omega$.		
\newline \textit{Step 2b.} If {$p_\om(|S^+|) > p_\om(|S^-|)$}, then $\eta = \eta + \xi$, else if {$p_\om(|S^+|) < p_\om(|S^-|)$}, then $\eta = \eta - \xi$. 
		
		{\it\bf  \item Step 3. Generate sub-gradient based cut for modified excess problem.} Use MOD-EE-SC procedure to generate sub-gradient based cuts for modified excess problem and add them to the master problem \eqref{eq-master-1}.
		
		{\it\bf  \item Step 4. Compare and updated the bounds.} Solve \eqref{eq-master-1} to obtain $\mathcal{Q}'_{EE}$ and let the solution be $\hat{x}$. Set $x=\hat{x}$ and compute $\mathcal{Q}_{ASD}.$ If {$\mathcal{Q}'_{EE} + \rho \eta > LB$}, then set $LB = \mathcal{Q}'_{EE} + \rho \eta$. If {$\mathcal{Q}_{ASD} < UB$}, then set $UB = \mathcal{Q}_{ASD}$.

		{\it\bf  \item Step 5. Termination.} If {$UB - LB < \epsilon$}, then stop, else initialize $S^+$ and $S^-$ and go to Step 2.						

		\item \hrulefill
	\end{hanglist}
	\caption{Risk-Measure - Absolute semi-deviation (RM-ASD) Algorithm}
	\label{figRMASD}
\end{figure}

For MOD-EE-SC procedure, we initialize the parameters in Step 0. The relaxed sub-problems $SP_{ee}^{lp}(x,\om)$ for modified expected excess problem are solved at a given $k$ iteration, and a mean value `$\bar{Q}$' is calculated based on the objective values of second-stage problems in Step 1. The deviation, cut co-efficients and right-hand side value are calculated based on the gradient information from the objective functions in Step 2. Finally, a cut is generated and added to $MP_{ee}$ where `$\gamma$' is an unrestricted variable.  

\begin{algorithm}
	\caption*{MOD-EE-SC Procedure}
	\label{alg:FCG1}
	\baselineskip 0.5 cm
	\small
	\begin{algorithmic}
		\vspace{.1cm}
		\STATE \textbf{Step 0. Initialization:} Set $\sigma \leftarrow 0, \sigma^0 \leftarrow 0, \bar{Q} \leftarrow 0, $ $x \in X$.
		\STATE \textbf{Step 1. Solve Subproblem LPs:}
		\FOR{$\omega \in \Omega$ solve subproblem $SP_{ee}^{lp}(x,\om)$} 
		\STATE get dual solution $\bar{\pi}(\om)$
		\STATE compute $\bar{Q} \leftarrow \bar{Q} + p(\om) f^k_{EE}(x^k,\omega)$
		\ENDFOR					
		\STATE \textbf{Step 2. Generate cut co-efficients and RHS value:}		
		\FOR{$\omega \in \Omega$} 
		\IF { $f^k_{EE}(x,\omega) \geq \bar{Q} $}
		\STATE compute $\sigma \leftarrow \sigma + p(\om)[T(\om)^\top\bar{\pi}(\om)-c]$
		\STATE compute $\sigma^0 \leftarrow \sigma^0 + p(\om)[\bar{\pi}(\om)^\top r(\om)]$		
		\ELSE 		
		\STATE compute $\sigma \leftarrow \sigma + p(\om)[\sum_{\omega' \in \Omega} p(\om')T(\om')^\top\bar{\pi}(\om')-c]$
		\STATE compute $\sigma^0 \leftarrow \sigma^0 + p(\om)[\sum_{\omega' \in \Omega}\bar{\pi}(\om')^\top r(\om')]$		
		\ENDIF					
		\ENDFOR
		\STATE \textbf{Step 3. Update and Solve the Master Problem:}
		\STATE add optimality cut $\sigma^\top x + \gamma \geq \sigma^0$ to master problem \eqref{eq-master-1}.		
	\end{algorithmic}
\end{algorithm}

\section{Computational Study}\label{sec-intro}
In this section, we provide results from two types of computational experiments. Our objective is to provide insights with regard to decision making process when ASD is considered instead of using risk-neutral  expected costs, and efficacy of the algorithm presented in the previous sections for ASD models. For MSSOP, replenishment policies are generated using ASD risk-measure and risk-neutral models, and their performance and robustness of the solutions are evaluated by simulating a set of demand scenarios representing the reality. Then the efficiency of proposed methodology is tested using stochastic multi-dimensional knapsack instances from the literature.

\subsection{Multi-Item Single Source Ordering Problem formulation}\label{S1.1}

This is a traditional product unit formulation, where the production variables are represented as units of demand. Consider a $t \in T$ period planning horizon problem with $i \in I$ items and $j \in J$ discount break points. In the two-stage model, the replenishment costs comprising of minor setup and transportations costs for the entire planning horizon represents here-and-now first-stage decision. Based on the first-stage replenishment plan, the recourse decisions to minimize the expected inventory and unsatisfied costs are minimized due to randomness in demand. The first-stage model uses the following deterministic parameters, $k_i$, setup cost for item $i$; $f_{jt}$, transportation cost at freight discount break point $j$ in period $t$; $w_i$, per unit weight of item $i$; $m_j$, freight discount break point $j$; $h_i$, inventory holding cost per unit of item $i$ per period; $d_{it}(\om)$, demand for item $i$ for period $t$ for scenario $\om$. The first-stage decision variables include: $x_{it}$,  order size of item $i$ replenished in period $t$; $y_{it} = 1$ if item $i$ is replenished in period $t$; $z_t$, total weight of items ordered/transported in period $t$; $q_{jt}$, binary variable that indicates where the freight discount range $z_t$ falls, if $m_j \leq \sum_{i \in I} w_i \sum_{i \in I} x_{it} \leq m_{j+1}$ then $q_{jt}= 1$, otherwise $0$, and $z_{jt}$, auxiliary variable linking transportation quantity to the piecewise linear transportation cost. The second-stage variables include, $v_{it}(\om)$, ending inventory of item $i$ in period $t$ for the scenario $\om$; $u_{it}(\om)$, lost sales of item $i$ in period $t$ for the scenario $\om$;

The first-stage model for MSSOP to minimize the replenishment costs is represented as follows:

\begin{equation}
	\setlength{\belowdisplayskip}{0pt} \setlength{\belowdisplayshortskip}{0pt}
	\setlength{\abovedisplayskip}{0pt} \setlength{\abovedisplayshortskip}{0pt}
	\text{SCM=} \Min \sum_{i \in I} \sum_{t \in T} k_i y_{it} + \sum_{j \in F} \sum_{t \in T} f_{j} z_{jt} + \mathbb{E}_\omega \phi(x,\omega) \qquad \nonumber
\end{equation}

\begin{xalignat}{3}
	\setlength{\belowdisplayskip}{0pt} \setlength{\belowdisplayshortskip}{0pt}
	\setlength{\abovedisplayskip}{0pt} \setlength{\abovedisplayshortskip}{0pt}
	& \hspace{-.40cm}\mbox{Subject to} && \nonumber \\[0.2cm]
	& x_{it} \leq My_{it} \, 
	&& \quad \forall  \, i\in I, t\in T, \\[0.2cm]	
	& \sum_{i \in I} w_i \sum_{i \in I} x_{it} \leq \sum_{j \in J} m_j z_{jt}  \,
	&& \quad \forall  \, t\in T, \\[0.2cm]        
	& z_{1t} \leq q_{1t} \, 
	&& \quad \forall  \, t\in T, \\[0.2cm]
	& z_{jt} \leq q_{j-1t} + q_{jt} \, 
	&& \quad \forall  \, j\in 2,...,J-1, t\in T, \\[0.2cm]
	& z_{Jt} \leq q_{J-1t} \, 
	&& \quad  \forall  \, t\in T, \\[0.2cm]
	& \sum_{j \in J} z_{jt} \leq 1  \,
	&& \quad \forall  \, t\in T, \\[0.2cm]        
	& \sum_{j \in J} q_{jt} \leq 1  \,
	&& \quad \forall  \, t\in T, \\[0.2cm]        
	& q_{jt},y_{it} \in \{0,1\}, x_{it} \geq 0, z_{jt} \geq 0 && \quad \forall  \, i \in I, \, j \in J,\, t \in T. \\[0.2cm]        	    	
\end{xalignat}

The second-stage recourse function based on the replenishment decisions taken in first-stage and for a particular scenario $\om$ in $\Omega$ is represented as follows:

\begin{equation}
\setlength{\belowdisplayskip}{0pt} \setlength{\belowdisplayshortskip}{0pt}
\setlength{\abovedisplayskip}{0pt} \setlength{\abovedisplayshortskip}{0pt}
 \phi(x,\omega) =  \Min \sum_{i \in I} \sum_{t \in T} [ (h_{i} v_{it}(\om) + p_i u_{it}(\om) )] \qquad \nonumber
\end{equation}

\begin{xalignat}{2}
	& \hspace{-.40cm}\mbox{Subject to} && \nonumber \\[0.2cm]
	& v_{i1}(\om) = o_i + \sum_{t \in T} x_{it} + u_{i1}(\om) -d_{i1}(\om) \,
	&& \quad \forall  \, i\in I, \\[0.2cm]        	
	& v_{it}(\om) = v_{it-1}(\om) + x_{it} + u_{it}(\om) -d_{it}(\om) \,
	&& \quad \forall  \, i\in I, t\in T, \\[0.2cm]        	
	& v_{it}(\om) \geq 0, 0 \leq u_{it}(\om) \leq d_{it}(\om) && \quad \forall  \, i \in I, \, t \in T.     
\end{xalignat}

Constraint set (12) prohibits replenishment unless the setup charge is incurred. Constraint sets (13)-(16) model the transportation cost as a piecewise linear function. The weight range $z_t$ that is shipped in each period is indicated by $q_{jt}$ being equal to 1 ($m_j<= \sum_{i \in I} w_i \sum_{i \in I} x_{it} < m_{j+1}$). The corresponding discounted freight cost is obtained by a convex combination of the transportation cost values at two discount break points by making sure $z_{kt} + z_{k+1t} = 1$, while remaining $z_{kt}$s are set to zero. Constraints (19) present the restrictions for decision variables. Also, to obtain a tighter bound, we replace $M$ with $\sum_{t'=t}^{T} max(d_{it'}\om)$ in constraint set (13).

In the second-stage formulation, constraint sets (21) and (22) are the typical mass-balance constraints to determine the on-hand inventory for each item for the first period and other period of the planning horizon, respectively. The objective function minimizes the penalties for lost sales and inventory for each item $i$ for each period $t$. 

In this experiment set up, we compare replenishment policy created by expected costs and ASD  formulation over a planning horizon of 10 time periods. The experiments are conducted with number of items as 5, 10, and 15 for 10 time periods with 25 scenarios in each case. The experiment set up is motivated by real-time practice where replenishment decisions are made periodically in organizations. The first-stage decisions are the replenishment decisions made for the entire planning horizon based on which purchase orders are made, and the second-stage reflect the appropriate risk-measure cost functions for lost sales and inventory due to the realization of demand and replenishment decisions made in the first-stage. This way the manager of the firm will know the replenishment plans for the upcoming weeks based on which transportation and other resources can be scheduled. The replenishment decisions from risk-neutral expected costs and ASD are evaluated by simulating demand for the items for entire planning horizon. Five replications were generated for each of the instances, and expected lost sales and inventory costs were calculated. The performance of policies using expected costs and ASD are measured by two perspectives; \# of time periods the demand was unmet or lost sales occurred and the quantity of lost sales. This is a standard way to measure the fill-rate in practice. The scenarios are sampled with demand from a normal distribution $\mathcal{N}(100,10)$. As denoted in Cotton and Ntaimo \cite{cotton2015computational}, the risk-neutral approach is still appropriate for most of the standard stochastic programming test instances due to their uniform or normal-like marginal distributions. However, when the distributions are modified, the risk-neutral approach may no longer be appropriate and the risk-averse approach becomes necessary. Hence, 20\% of the demand were considered lumpy, i.e, with normal distribution $\mathcal{N}(150,20)$. The transportation cost parameters used for all the instances are as follows: $f_1=0, f_2 = f_3 = 1,000, f_4 = f_5 = 1,500, f_6 = f_7 = 2,200; m_1 = 0, m_2 = 0, m_3 = 17,500, m_4 = 17,500, m_5 = 35,000, m_6 = 35,000,$ and $m_7 = 70,000$. The values for $f$ and $m$ are devised based on the demand distribution, number of items and time periods. The holding cost, fixed item setup cost, and unit weights are generated based on Venkatachalam and Narayanan \cite{venkatachalam2015efficient} from uniform distributions, $\mathcal{U}(50,100), \mathcal{U}(500,1000)$ and $\mathcal{U}(1,5)$, respectively. The simulation runs use similar distributions for demand.

\begin{figure}[H]
	\centering
	\includegraphics[scale=1.0]{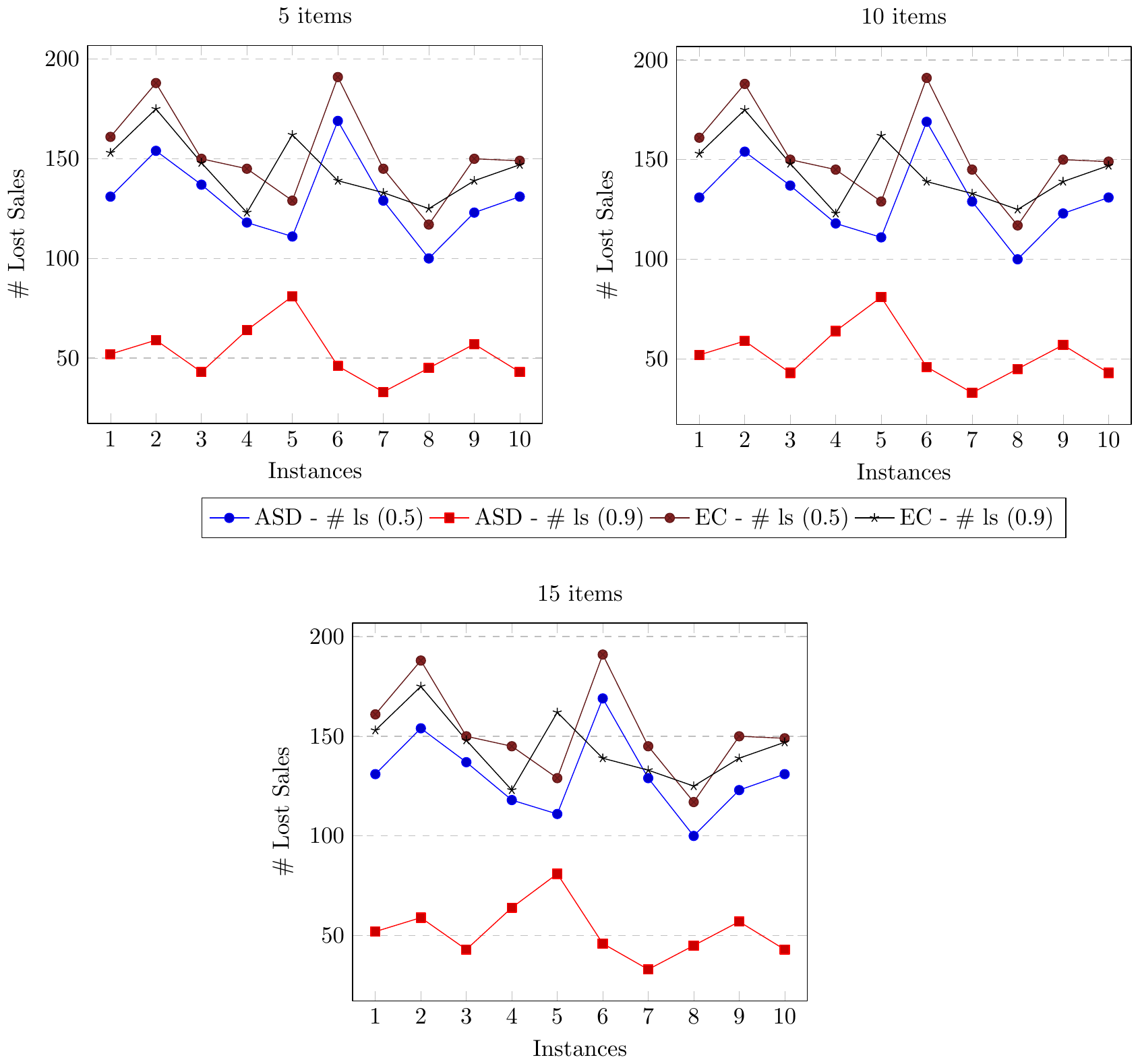}\\
	\caption{Number of Lost Sales from simulation study}
	\label{sc-fig1}
\end{figure}

Figures \ref{sc-fig1}, \ref{sc-fig2}, and \ref{sc-fig3} represent the number of occurrences and quantity of lost sales, and total replenishment costs from the simulation runs, respectively. In the figures, `ASD-\# ls(0.5)', `ASD-\# ls(0.9)' represent the lost sales count using ASD risk-measure with $\rho$=0.5 and $\rho$=0.9, respectively. Similarly, `EC-\# ls(0.5)', `EC-\# ls(0.9)' represent the count for lost sales using expected costs compared with ASD risk-measure with $\rho$=0.5 and $\rho$=0.9, respectively. The results indicate that SCM model with ASD risk measure and $\rho=0.9$ has the least number of lost sales followed by ASD risk measure and $\rho=0.5$. The runs from risk neutral expected costs have the highest lost sales. The trend is similar in the figure \ref{sc-fig2} for lost sales quantity as well.

\begin{figure}[H]
	\centering
	\includegraphics[scale=1.0]{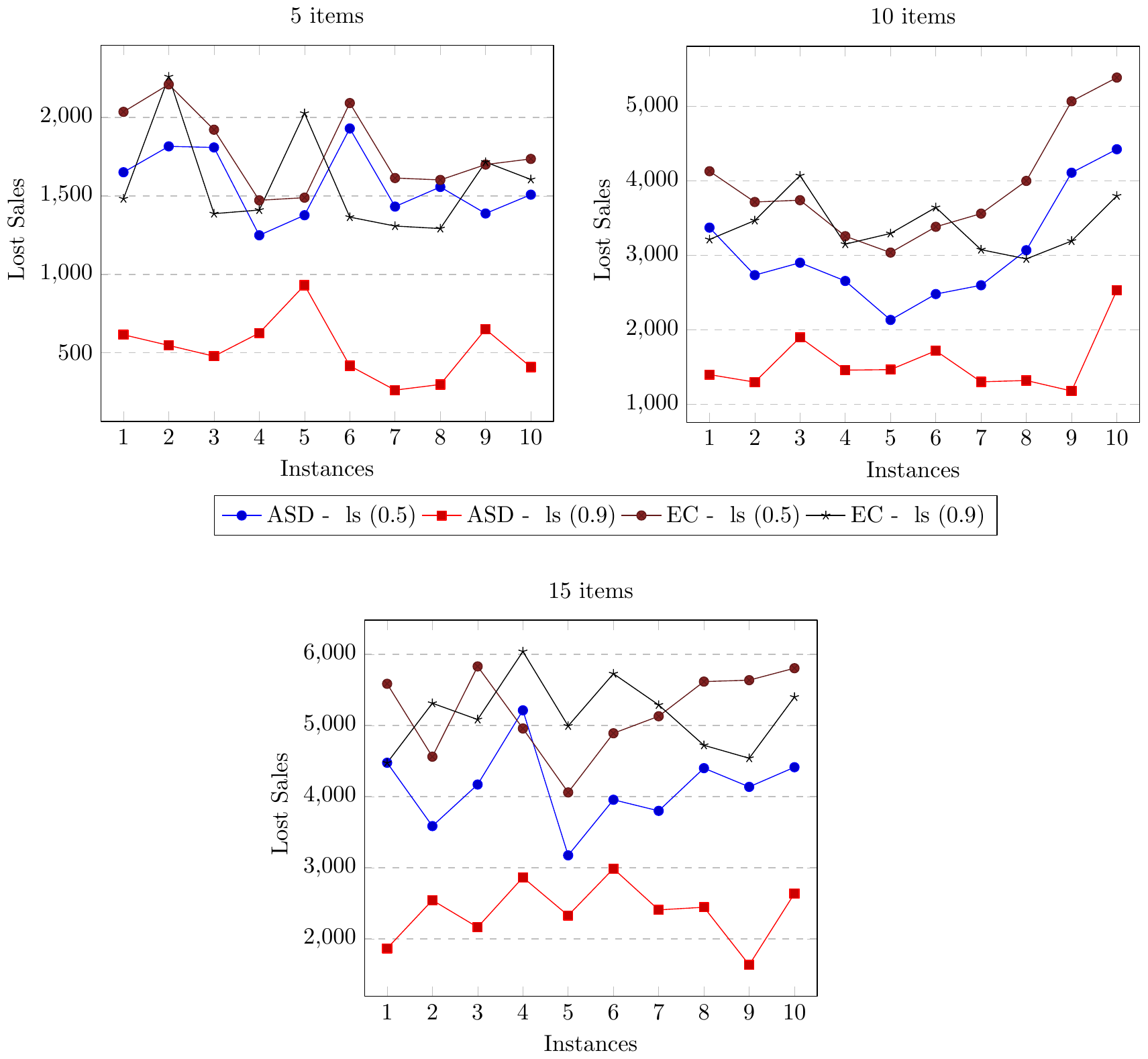}\\
	\caption{Quantity of Lost Sales from simulation study}
	\label{sc-fig2}
\end{figure}

Figure 7 represents the replenishment costs and penalty cost from excess inventory or lost sales using ASD and risk-neutral model. In the figure, `ASD-ls(0.9)', `EC-ls(0.5)', and `ASD-ls(0.5)' represent costs for ASD with $\rho=0.9$, risk-neutral expected costs, and ASD with $\rho=0.5$, respectively.  As expected costs are least expensive followed by ASD with $\rho=0.5$, and finally ASD with $\rho=0.9$. The costs for ASD with $\rho=0.9$ are the highest as these are the most conservative replenishment policies due to higher weightage for the variability in demand.

It should be noted that with an average increase of 0.3\% in costs for 5 items replenishment policy for ASD with $\rho=0.5$ compared to risk-neutral policy, it decreases lost sales occurrences by 14\% and 12\% in lost sales quantity on average. Similarly, for 10-items and 15-items data instances, replenishment policies with ASD and $\rho=0.5$ are more expensive than risk-neutral policies by 0.6\% and 0.8\% on an average, respectively. However, the occurrences of lost sales for 10-items and 15-items decreases by 9.8\% and 22.2\% on average, respectively. The lost sales quantity for 10-items and 15-items also significantly decreases by 22\% and 20\% on average, respectively.

The trend is similar for the comparison between ASD with $\rho=0.9$ and risk-neutral approaches. With an average increase of 8.7\% in costs for 5 items replenishment policy for ASD with $\rho=0.9$ compared to risk-neutral policy, there is a decrease of 65\% on average in lost sales occurrences and 70\% on average in lost sales quantity. Similarly, for 10-items and 15-items data instance, replenishment policy with ASD and $\rho=0.9$ are more expensive than risk-neutral policies by 7.7\% and 7.2\% on average, respectively. However, the lost sales occurrences for 10-items and 15-items decreases by 49\% and 50\% on average, respectively. The lost sales quantity for 10-items and 15-items also significantly decreases by 60\% and 54\% on average, respectively.  

\begin{figure}[H]
	\centering
	\includegraphics[scale=0.85]{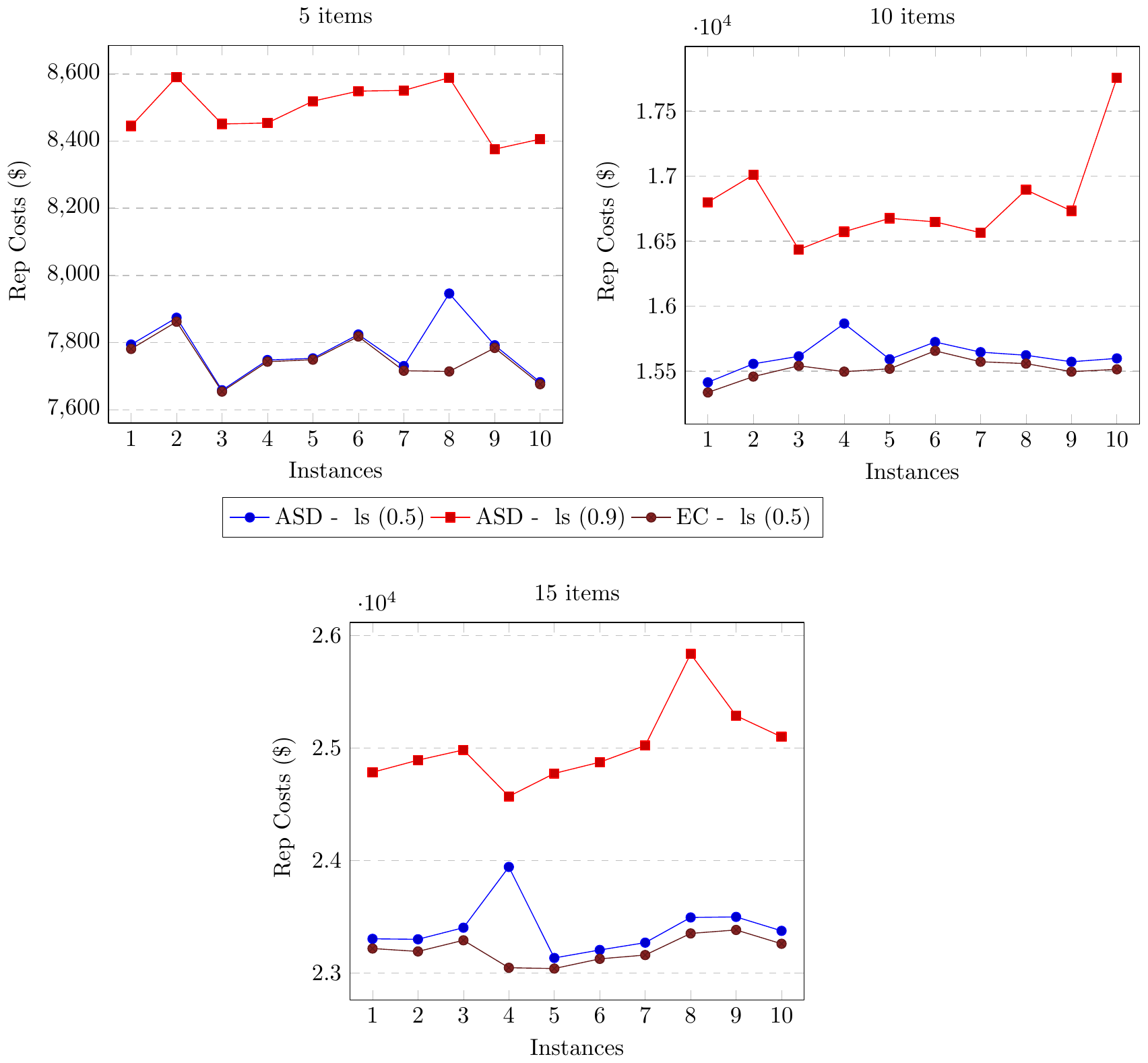}\\
	\caption{Replenishment Costs from simulation study}
	\label{sc-fig3}
\end{figure}

The results clearly provide insights of different policies from cost and performance perspectives. In our experiment setups, using the ASD model with $\rho=0.5$ is very attractive since with a very small marginal increase of replenishment and second-stage costs, it was able to provide significant improvement in lost sales count and quantity compared to risk-neutral approach.

\subsection{Stochastic Multidimensional Knapsack Problems Test Sets}\label{subsec-knapsack}
General knapsack constrained stochastic programs have received attention in the literature. Knapsack constraints appear in many applications of SP2 such as investment planning, transportation, scheduling, selling of assets and investment selection, and operations strategy. The exact stochastic multidimensional knapsack problem test instances used in this study are considered by Eric et al. \cite{beier2015stage} and Venkatachalam \cite{saran}. This class of SP2 can be formulated as follows:

\begin{equation}
\begin{alignedat}{2}\label{eq-knap1}
\mathcal{Q}_{E} = \Min   & \sum_{i=1}^{n_1} c_i^\top x_i +  \mathbb{E}_\omega \phi(x,\om) && \\
\text{s.t. }        & \sum_{i=1}^{n_1} x_i \leq b           && \\
& x_i \in \{0,1\}, \, \, \forall i=1\ldots n_1                               &&
\end{alignedat}
\end{equation}

In problem \eqref{eq-knap1}, $x$ denotes the first-stage decision vector, $c \in \Re^{n_1}$ is the first-stage cost vector, $b \in \Re$ is the first-stage righthand side, $ \phi(x,\tilde{\om})$ is the recourse function with $\tilde{\om}$ being a multivariate random variable, and $\mathbb{E}$ denotes the mathematical expectation operator satisfying $\mathbb{E}_\omega \phi(x,\om) < \infty$. The underlying probability distribution of $\tilde{\om}$ is discrete with a finite number of realizations (scenarios/subproblems) in set $\Omega$ and corresponding probabilities $p_\om, \om \in \Omega$. Thus for a given scenario $\om \in \Omega$, the recourse function $\phi(x,\om)$ is given by the following second-stage binary program:
\begin{equation}
\begin{alignedat}{2}\label{eq-knap2}
\phi(x,\om) = \Min         & \sum_{i=1}^{n_2} q(\om)^{i\top} y(\om)^{i}       && \\
\text{s.t. } & \sum_{i=1}^{n_2} w^{ij} y(\om)^{i} \leq h(\om)^j - \sum_{i=1}^{n_1} x_i, \ \forall j=1\ldots m_2 && \\
& y(\om)^i \in \{0,1\}, \ \forall i=1\ldots n_2.               &&
\end{alignedat}
\end{equation}
In formulation \eqref{eq-knap2}, $y(\om)$ is the recourse decision vector,  $q(\om) \in \Re^{n_2}$ is the recourse cost vector, $w \in \Re^{m_2 \times n_2}$ is the recourse parameter, and $h(\om) \in \Re^{m_2}$ is the righthand side.

This formulation has knapsack constraints in both the first- and second-stages, and each subproblem has equal probability of occurrence. Instance data were randomly generated using the uniform distribution ($\mathcal{U}$) with different parameter values. The knapsack weights were generated by sampling from $\mathcal{U}(2, 8)$. Objective function coefficients were generated with the first-stage costs being chosen to be much higher than second-stage costs. Objective function coefficients for first-stage variables were sampled from $\mathcal{U}(400, 650)$ while those for the second-stage were sampled from $\mathcal{U}(6, 16)$. To generate tight knapsack constraints, the righthand side value for each of the constraints was generated by finding the maximum knapsack weight ($W_{max}$) for the constraint and sampling from $\mathcal{U}(2+2W_{max}, 4W_{max})$.

In the algorithm \ref{figRMASD}, we need to determine $\mathcal{Q}_{E}$ and $\mathcal{Q}'_{EE}$, since the second-stage problem has integrality restrictions, we used Fenchel decomposition procedure. The details of Fenchel decomposition can be found at Beier et al. \cite{beier2015stage}, Venkatachalam \cite{saran}, and Venkatachalam and Ntaimo \cite{venkatachalam2016integer}. We considered four test sets, each with five randomly generated instances of same size. The problem characteristics are given in Table \ref{tab:kpdimension}. The columns of the table are problem name, `Scens' is the number of scenarios, `Bvars' is the number of binary variables, `Constr' is the number of constraints, and `Nzeros' is the number of non-zero elements for each of the problem instances. The first numeral in the problem name describes the number of first-stage variables, the second describes the number of second-stage variables, and the third describes the number of scenarios. Computational results are reported in Tables \ref{tab:comp1} and \ref{tab:comp2} reporting results for Sets 1 and 2, and Sets 3 and 4, respectively. For each instance, five different replications were executed, with one hour time limit. The columns of the tables are organized as follows:`Instance' is the instance name and the following three columns are based on the runs from RM-ASD algorithm. `LB' is the lower bound of the algorithm, `UB' is the upper bound of the algorithm, and `Gap(\%)' is the gap between the LB and UB value after the stipulated runtime of one hour. Finally, `Obj-C' is the objective and `C Gap(\%)' is the CPLEX MIP gap after solving deterministic equivalent problem (DEP) for one hour.

\begin{table}[!ht]
	\begin{center}
		\small
		\begin{tabular}{|l|c|c|c|c|}
			\hline
			Problem    & Scens & Bvars & Constr & Nzeros \\
			\hline
			\hline
			K.10.20.50      & 50 & 1,010 & 1,010 & 30,100\\
			K.10.20.100     & 100 & 2,010 & 2,010 & 60,100\\
			K.20.30.50      & 50 & 1,520 & 1,010 & 50,200\\
			K.20.30.100     & 100 & 3,020 & 2,010 & 100,200\\
			K.30.40.50      & 50 & 2,030 & 1,010 & 70,300\\
			K.30.40.100     & 100 & 4,030 & 2,010 & 140,300\\
			K.40.50.50      & 50 & 2,540 & 1,010 & 90,400\\
			K.40.50.100     & 100 & 5,040 & 2,010 & 180,400\\
			\hline
		\end{tabular}
	\end{center}
	\caption{DEP Instance Characteristics}
	\label{tab:kpdimension}
\end{table}

\begin{figure}[H]
	\centering
	\includegraphics[scale=0.50]{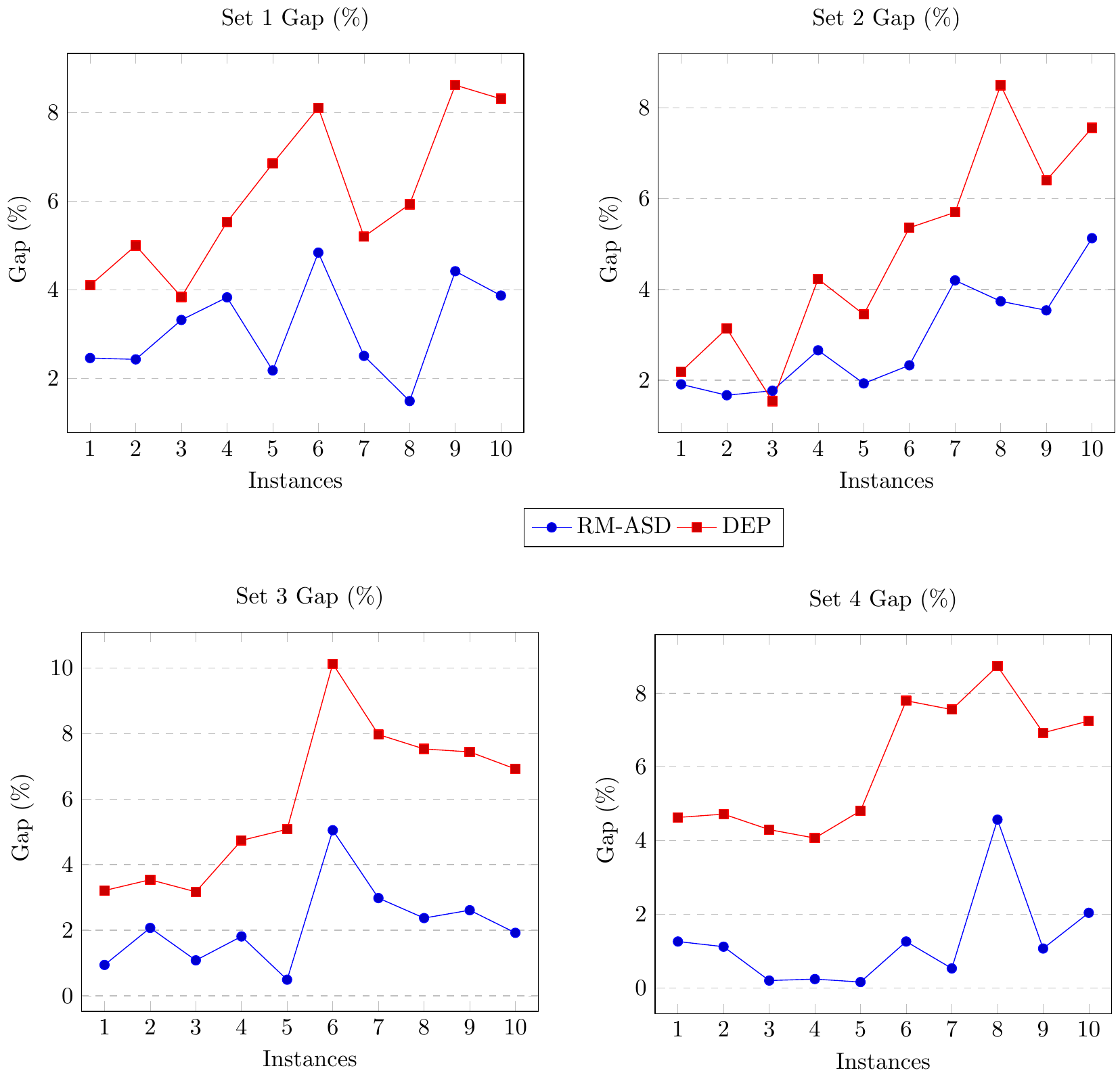}\\
	\caption{Performance of RM-ASD Algorithm}
	\label{brp-fig3}
\end{figure}

The results indicate that RM-ASD algorithm performs consistently better than direct solver. As the instance size increases, the gap between direct solver and RM-ASD algorithm widens further.
\section{Conclusions}\label{sec-intro}
Risk measures adhering to statistical functionals such as \textit{coherent risk of measures} has become a widely accepted practice in stochastic programming and optimization under uncertaintyy. The coherent risk measures are broadly classified into two categories - quantile and deviation based risk measures. This paper presents a methodology for one such `deviation' based risk measure \textit{`absolute semi-deviation' (ASD).} The methodology is evaluated from two perspectives - usefulness of ASD over risk-neutral expected costs by considering a replenishment problem in supply chain, and  efficacy of the algorithm is evaluated using standard multi-dimensional knapsack instances from the literature. The replenishment policies from ASD and expected costs for supply chain model are evaluated using a simulated replications of demand over the planning horizon. The number and quantity of lost sales are used to evaluate the policies and the results indicate for the given experiments provide insights that with ASD risk measure though the replenishment costs increases marginally overall key performance metrics for supply chain are well protected. Due to lack of consideration of variation in data by traditional risk-neutral expected costs, the computational results indicate that considering risk-measures for volatile demand will improve the performance of the system may be with a marginal increase in costs. 

Solving stochastic integer programs is a very hard prospect, and most algorithms are very dependent on the structure of the problem to guarantee tractability and convergence. For this work, the stochastic data was used to formulate the deterministic equivalent problem and then solved using an integer programming solver directly. The number of random variable realizations for each time period was limited in order to make the subproblems solvable within a reasonable amount of time. Thus, extensions to this work include the development and implementation of decomposition algorithms for solving the ASD risk measure with 0-1 integer variables in the second-stage. The lack of block angular structure for ASD makes its harder to use traditional Benders' decomposition algorithm, hence we proposed a methodology to use a modified expected excess model to get better bounds for ASD. The computational results using stochastic multi-dimensional knapsack instances indicate consistent better performance from our algorithm compared to a direct solver.    

Another extension to methodology will to consider integer variables in the second-stage where quite a lot of application like lot-sizing in supply chain will benefit. There has been recent research focus on risk measures while considering stochastic dominance, so considering risk-measure with stochastic dominance and a quantile measure like CVar and compare its performance against ASD both qualitatively and quantitatively will also provide better insights for the application.

\pagebreak
\section{Appendix}

	\begin{table}[!ht]
		\begin{center}
			\small
			\begin{tabular}{|c|c|l|c|c|c|c|c|}
				\hline
				&\textbf{Problem}    & \textbf{Instance} & \textbf{LB} & \textbf{UB} & \textbf{Gap(\%)} &\textbf{Obj-C}&\textbf{C-Gap(\%)} \\
				\hline
				&1&knaps.10.20.50.a&-89.82&-87.61&2.46&-87.61&4.10\\
				&2&knaps.10.20.50.b&-91.30&-89.08&2.43&-89.11&5.00\\
				&3&knaps.10.20.50.c&-82.99&-80.23&3.32&-80.43&3.84\\
				&4&knaps.10.20.50.d&-83.44&-80.24&3.83&-80.21&5.53\\
				Set 1&5&knaps.10.20.50.e&-88.49&-86.56&2.18&-86.46&6.85\\
				&6&knaps.10.20.100.a&-62.64&-59.60&4.84&-58.95&8.11\\
				&7&knaps.10.20.100.b&-65.15&-63.51&2.51&-63.29&5.20\\
				&8&knaps.10.20.100.c&-59.22&-58.33&1.49&-57.83&5.93\\
				&9&knaps.10.20.100.d&-59.21&-56.59&4.42&-56.18&8.62\\
				&10&knaps.10.20.100.e&-59.42&-57.12&3.87&-56.64&8.31\\
				\hline
				&Average&&&3.14&&&6.15\\
				\hline
				&1&knaps.20.30.50.a&-93.62&-91.83&1.91&-92.02&2.19\\
				&2&knaps.20.30.50.b&-91.70&-90.17&1.67&-89.99&3.14\\
				&3&knaps.20.30.50.c&-90.14&-88.54&1.77&-88.76&1.54\\
				&4&knaps.20.30.50.d&-90.44&-88.04&2.66&-88.23&4.23\\
				Set 2&5&knaps.20.30.50.e&-91.88&-90.10&1.93&-90.22&3.45\\
				&6&knaps.20.30.100.a&-68.76&-67.15&2.33&-66.79&5.36\\
				&7&knaps.20.30.100.b&-61.03&-58.47&4.20&-58.42&5.70\\
				&8&knaps.20.30.100.c&-64.77&-62.35&3.74&-61.34&8.50\\
				&9&knaps.20.30.100.d&-64.16&-61.89&3.54&-61.08&6.40\\
				&10&knaps.20.30.100.e&-60.92&-57.80&5.13&-58.18&7.56\\
				\hline
				&Average&&&2.89&&&4.81\\				
				\hline
			\end{tabular}
		\end{center}
		\caption{ Computational Results (Set 1 and Set 2) - Runtime (3600s)}
		\label{tab:comp1}
	\end{table}

	\begin{table}[!ht]
		\begin{center}
			\small
			\begin{tabular}{|c|c|l|c|c|c|c|c|}
				\hline
				&\textbf{Problem}    & \textbf{Instance} & \textbf{LB} & \textbf{UB} & \textbf{Gap(\%)} &\textbf{Obj-C}&\textbf{C-Gap(\%)} \\
				\hline
				&1&knaps.30.40.50.a&-95.22&-94.33&0.94&-94.62&3.21\\
				&2&knaps.30.40.50.b&-94.02&-92.07&2.07&-92.31&3.54\\
				&3&knaps.30.40.50.c&-93.86&-92.84&1.08&-92.94&3.17\\
				&4&knaps.30.40.50.d&-94.90&-93.18&1.81&-93.13&4.74\\
				Set 3&5&knaps.30.40.50.e&-96.09&-95.61&0.49&-95.69&5.08\\
				&6&knaps.30.40.100.a&-67.11&-63.72&5.05&-63.62&10.13\\
				&7&knaps.30.40.100.b&-68.66&-66.61&2.98&-66.17&7.97\\
				&8&knaps.30.40.100.c&-62.96&-61.47&2.37&-61.88&7.53\\
				&9&knaps.30.40.100.d&-65.73&-64.02&2.61&-64.13&7.44\\
				&10&knaps.30.40.100.e&-64.12&-62.89&1.92&-63.47&6.92\\
				\hline
				&Average&&&2.13&&&5.97\\
				\hline
				&1&knaps.40.50.50.a&-95.13&-93.94&1.26&-93.9&4.63\\
				&2&knaps.40.50.50.b&-96.04&-94.97&1.12&-95.01&4.72\\
				&3&knaps.40.50.50.c&-93.89&-93.70&0.20&-93.71&4.30\\
				&4&knaps.40.50.50.c&-96.84&-96.60&0.24&-96.61&4.07\\
				Set 4&5&knaps.40.50.50.d&-95.96&-95.81&0.16&-95.75&4.81\\
				&6&knaps.40.50.100.a&-66.61&-65.77&1.26&-65.84&7.80\\
				&7&knaps.40.50.100.b&-67.55&-67.19&0.53&-67.19&7.56\\
				&8&knaps.40.50.100.c&-68.67&-65.54&4.57&-65.54&8.74\\
				&9&knaps.40.50.100.d&-66.98&-66.26&1.07&-66.54&6.93\\
				&10&knaps.40.50.100.e&-66.44&-65.08&2.04&-65.42&7.25\\
				\hline
				&Average&&&1.24&&&6.08\\
				\hline
			\end{tabular}
		\end{center}
		\caption{ Computational Results (Set 3 and Set 4) - Runtime (3600s)}
		\label{tab:comp2}
	\end{table}

\newpage
\bibliography{asd}

\end{document}